\documentclass[leqno,12pt]{article}
\usepackage{amssymb}
\usepackage{amsmath}
\usepackage{amsfonts}
\usepackage{graphicx}%
\setcounter{MaxMatrixCols}{30}
\topmargin=-1.5cm
\textheight=22cm
\oddsidemargin=.7cm
\evensidemargin=-.7cm
\textwidth=15cm
\baselineskip=0mm
\parindent=0mm
\providecommand{\U}[1]{\protect \rule{.1in}{.1in}}
\newtheorem{theorem}{Theorem}

\newtheorem{corollary}[theorem]{Corollary}

\newtheorem{proposition}[theorem]{Proposition}

\newenvironment{proof}[1][Proof]{\noindent \textbf{#1.} }{\  \rule{0.5em}{0.5em}}
\begin{document}

\title{\textbf{Charakterisierungen spezieller windschiefer Regelfl\"{a}chen \\ durch die
Normalkr\"{u}mmung ausgezeichneter Fl\"{a}chenkurven\bigskip}}
\author{\textbf{Stylianos S. Stamatakis\strut \bigskip}\\
Abteilung f\"{u}r Mathematik \\ der Aristoteles Universit\"{a}t
Thessaloniki\\
GR-54124 Thessaloniki, Griechenland\\
e-mail: stamata@math.auth.gr}
\date{}
\maketitle

\begin{abstract}
\noindent We consider ruled surfaces in the three-dimensional Euclidean space
and some geometrically distinguished families of curves on them whose normal
curvature has a concrete form.
The aim of this paper is to find and classify
all ruled surfaces with the mentioned property
\smallskip \vspace{0.1cm}
\newline MSC 2010: 53A25, 53A05\newline Keywords: Ruled surfaces,
normal curvature, principal curvatures, Edlinger-surfaces, helicoid

\end{abstract}

\section{Einleitung}

Geometrisch ausgezeichnete Kurvenscharen auf windschiefen Regelf\"{a}chen des
euklidischen Raumes $%
\mathbb{R}
^{3}$ sind von verschiedenen Autoren vielfach untersucht worden. Eine Reihe
von Resultaten ergeben sich, wenn man fordert, dass die betrachtete
Kurvenschar eine zus\"{a}tzliche Eigenschaft besitzt. Die vorliegende Arbeit
liefert einen Beitrag zu diesem Themenkreis. Wir betrachten spezielle
Kurvenscharen auf einer windschiefen Regelfl\"{a}che und nehmen an, dass die
Normalkr\"{u}mmung l\"{a}ngs jeder dieser Kurven eine bestimmte Gestalt
besitzt. Unser Ziel ist die Klassifikation der Regelfl\"{a}chen mit der
geforderten Eigenschaft.
\smallskip \vspace{0.2cm}

Im euklidischen Raum $%
\mathbb{R}
^{3}$ sei $\Phi$ eine regul\"{a}re und von torsalen Erzeugenden freie
Regelfl\"{a}che mit der Striktionslinie $\boldsymbol{s}=\boldsymbol{s}(u)$ und
dem Erzeugendeneinheitsvektor $\boldsymbol{e}(u)$. $\Phi$ sei \"{u}ber dem
Definitionsgebiet $G:=I\times%
\mathbb{R}
$ ($I\subset%
\mathbb{R}
$ offenes Intervall) von der Klasse $C^{3}$. Eine Parameterdarstellung von
$\Phi$ lautet dann%
\begin{equation}
\boldsymbol{x}(u,v)=\boldsymbol{s}(u)+v\, \boldsymbol{e}(u),\;u\in I,\;v\in%
\mathbb{R}
. \label{1}%
\end{equation}
Wegen der Annahme gilt%
\begin{equation}
\langle \boldsymbol{s}\,%
\acute{}%
\,(u),\, \boldsymbol{e}\,%
\acute{}%
\,(u)\rangle=0\; \forall \;u\in I,
\end{equation}
wobei Strich Ableitung nach $u$ und $\langle$ , $\rangle$ das
Standard-Skalarprodukt im Raume $%
\mathbb{R}
^{3}$ bedeuten. Den Parameter $u$ w\"{a}hlen wir so, dass%
\begin{equation}
\left \vert \boldsymbol{e}\,%
\acute{}%
\,(u)\right \vert =1\; \forall \;u\in I
\end{equation}
gilt.
\\[2mm]
L\"{a}ngs der Striktionslinie betrachten wir das begleitende Dreibein
\{$\boldsymbol{e}(u)$, $\boldsymbol{n}(u)$, $\boldsymbol{z}(u)$\}, wobei
$\boldsymbol{n}(u):=\boldsymbol{e}\,%
\acute{}%
\,(u)$ der \textit{Zentralnormalenvektor} und $\boldsymbol{z}%
(u):=\boldsymbol{e}(u)\times \boldsymbol{n}(u)$ der
\textit{Zentraltangentenvektor} von $\Phi$ ist. Es gelten folgende
Ableitungsgleichungen :%
\begin{equation}
\boldsymbol{e}\,%
\acute{}%
\,(u)=\boldsymbol{n}(u),\quad \boldsymbol{n}\,%
\acute{}%
\,(u)=-\boldsymbol{e}(u)+k(u)\, \boldsymbol{z}(u),\quad \boldsymbol{z}\,%
\acute{}%
\,(u)=-k(u)\, \boldsymbol{n}(u),
\end{equation}
wobei%
\begin{equation}
k(u)=(\boldsymbol{e}(u),\, \boldsymbol{e}\,%
\acute{}%
\,(u),\, \boldsymbol{e}\,%
\acute{}%
\, \,%
\acute{}%
\,(u))
\end{equation}
die \textit{konische Kr\"{u}mmung} von $\Phi$ bedeutet. F\"{u}r den
\textit{Drall} $\delta(u)$ und die \textit{Striktion} $\sigma(u)$ von $\Phi$
haben wir%
\begin{equation}
\delta(u)=(\boldsymbol{e}(u),\, \boldsymbol{e}\,%
\acute{}%
\,(u),\, \boldsymbol{s}\,%
\acute{}%
\,(u)),\quad \sigma(u):=\sphericalangle(\boldsymbol{e}(u),\, \boldsymbol{s}\,%
\acute{}%
\,(u))
\end{equation}
($-\frac{\pi}{2}$ $<\sigma \leq \frac{\pi}{2},\operatorname*{sign}%
\sigma=\operatorname*{sign}\delta$). Die Funktionen $k(u)$, $\delta(u)$ und
$\sigma(u)$ bilden bekanntlich ein vollst\"{a}ndiges Invariantensystem von
$\Phi$ (vgl. \cite{Hoschek}, S.19).\medskip \newline Die Koordinatenfunktionen
$g_{ij}$ bzw. $h_{ij}$ des ersten bzw. zweiten Fundamentaltensors
bez\"{u}glich der (lokalen) Koordinaten $u^{1}:=u$, $u^{2}:=v$ lauten%
\begin{equation}
\left \{
\begin{array}
[c]{c}%
{\normalsize (g}_{ij}{\normalsize )=}\left(
\begin{array}
[c]{cc}%
v^{2}+\delta^{2}\left(  \lambda^{2}+1\right)  & \delta \, \lambda \\
\delta \, \lambda & 1
\end{array}
\right) \\
{\small (h}_{ij}{\small )=}\frac{1}{w}\left(
\begin{array}
[c]{cc}%
-\left[  k\,v^{2}{\small +}\delta \,%
\acute{}%
v{\small +}\delta^{2}\left(  k-\lambda \right)  \right]  & \delta \\
\delta & 0
\end{array}
\right)
\end{array}
\right.  ,
\end{equation}
mit $w:=\sqrt{v^{2}+\delta^{2}}$.
\\[2mm]
F\"{u}r die Gau\ss sche Kr\"{u}mmung $K$ und
die mittlere Kr\"{u}mmung $H$ von $\Phi$ gelten die Beziehungen
\begin{equation}
K=\frac{-\delta^{2}}{w^{4}},\quad H=-\frac{k\,v^{2}+\delta \,%
\acute{}%
\,v+\delta^{2}\left(  k+\lambda \right)  }{2\,w^{3}},
\end{equation}
wobei $\lambda:=\cot \sigma$ gesetzt wurde.\smallskip

Eine windschiefe Regelfl\"{a}che $\Phi$, deren s\"{a}mtliche Schmiegquadriken
Drehhyperboloide sind, bezeichnet man bekanntlich als
\textit{Edlinger-Fl\"{a}che} \cite{Edlinger}, \cite{Hoschek}. Hinreichende und
notwendige Bedingungen daf\"{u}r sind die Beziehungen (\cite{Brauner}, S.103)%
\begin{equation}
\delta \,%
\acute{}%
\,=k\, \lambda+1=0.
\end{equation}
Es handelt sich um eine konstant gedrallte Regelfl\"{a}che mit der
Striktionslinie als Kr\"{u}mmugslinie. Die \textit{Kurven konstanten
Striktionsabstandes}, d.h. die Kurven $v=konst.$, sind in diesem Fall
Kr\"{u}mmungslinien von $\Phi$. Die andere Kr\"{u}mmungslinienschar wird durch%
\[
\lbrack k^{2}\,v^{2}+\delta^{2}(k^{2}+1)]\,du-\delta \,k\,dv=0
\]
bestimmt. Die zugeh\"{o}rigen Normalkr\"{u}mmungen der beiden
Kr\"{u}mmungslinien (Hauptkr\"{u}mmungen) lauten%
\begin{equation}
k_{1}=-k(u)\,w^{-1},\quad k_{2}=\frac{\delta^{2}(u)}{k(u)}w^{-3}.
\end{equation}

\section{Der Fall der Hauptkr\"{u}mmungen}

Im folgenden betrachten wir \textit{ausschlie\ss lich windschiefe
Regelfl\"{a}chen des Raumes} $%
\mathbb{R}
^{3}$ \textit{mit der Parameterdarstellung} (\ref{1}), \textit{die die
Voraussetzungen }(2) und (3) \textit{erf\"{u}llen.}
\smallskip \vspace{0.2cm}
Ausgehend von (10) stellen wir zun\"{a}chst die Aufgabe, \textit{alle}
\textit{Regelfl\"{a}chen zu bestimmen, deren eine Hauptkr\"{u}mmung die
Gestalt}%
\begin{equation}
k_{i}=f(u)\,w^{n},\;n\in%
\mathbb{Z}
,\;f(u)\in C^{0}(I),\;i=1\; \text{oder}\;2,
\end{equation}
\textit{besitzt}. Es ist offenbar $f(u)\neq0\; \forall \;u\in I$, denn $\Phi$
ist windschief.\smallskip

F\"{u}r die Normalkr\"{u}mmung in Richtung $du:dv$ findet man unter Beachtung
von (7)%
\begin{equation}
k_{N}=\frac{1}{w}\cdot \frac{-\left[  kv^{2}+\delta \,%
\acute{}%
\,v+\delta^{2}\left(  k-\lambda \right)  \right]  \,du^{2}+2\, \delta
\,du\,dv}{\left[  v^{2}+\delta^{2}\left(  \lambda^{2}+1\right)  \right]
\,du^{2}+2\, \delta \, \lambda \,du\,dv+dv^{2}}\text{,}%
\end{equation}
woraus mit (11)%
\begin{align*}
&  [f\,w^{n+1\,}[v^{2}+\delta^{2}(\lambda^{2}+1)]+k\,v^{2}+\delta \,%
\acute{}%
\,v+\delta^{2}(k-\lambda)]\,du^{2}\\
&  +2\, \delta \,(f\, \lambda \,w^{n+1}-1)\,du\,dv+f\,w^{n+1\,}dv^{2}=0
\end{align*}
folgt. Diese in $du:dv$ quadratische Gleichung besitzt genau dann
\textit{eine} L\"{o}sung, wenn ihre Diskriminante verschwindet:%
\begin{equation}
f^{2}\,w^{2n+4}+f\,[k\,v^{2}+\delta \,%
\acute{}%
\,v+\delta^{2}(k+\lambda)]\,w^{n+1}-\delta^{2}=0\quad \forall \;u\in I,\;v\in%
\mathbb{R}
.
\end{equation}
Es sei zun\"{a}chst $n=0$. Dann erhalten wir aus (13)%
\[
f^{4}(v^{2}+\delta^{2})^{4}-2\,f^{2}\delta^{2}(v^{2}+\delta^{2})^{2}%
+\delta^{4}-f^{2\,}[k\,v^{2}+\delta \,%
\acute{}%
\,v+\delta^{2}(k+\lambda)]^{2}(v^{2}+\delta^{2})=0.
\]
Auf der linken Seite steht ein Polynom achten Grades in $v$, das f\"{u}r jedes
$u\in I$ und unendlich viele Werte $v\in%
\mathbb{R}
$ zu erf\"{u}llen ist. Durch Koeffizientenvergleich mit dem Nullpolynom ergibt
sich $f=0$, also ein Widerspruch.
\\[2mm]
Wir unterscheiden nun folgende F\"{a}lle:
\\[2mm] \textit{Fall I}: Es sei $n$ ungerade. Die
Beziehung (13) erh\"{a}lt die Gestalt%
\begin{align}
Q(v):  &  =f\,[k\,v^{2}+\delta \,%
\acute{}%
\,v+\delta^{2}(k+\lambda)](v^{2}+\delta^{2})^{\frac{n+1}{2}}\nonumber \\
&  +f^{2}(v^{2}+\delta^{2})^{n+2}-\delta^{2}=0\quad \forall \;u\in I,\;v\in%
\mathbb{R}
.
\end{align}
Ist $n\geq1$, so liefert das Verschwinden des Koeffizienten des h\"{o}chsten
Grades des Polynoms $Q(v)$ $f=0$, was nicht m\"{o}glich ist.\smallskip \newline
Es sei $n=-1$. Dann wird (14)%
\[
Q(v)=f^{2}(v^{2}+\delta^{2})+f\,[k\,v^{2}+\delta \,%
\acute{}%
\,v+\delta^{2}(k+\lambda)]-\delta^{2}=0.
\]
Aus dem Verschwinden der Koeffizienten des Polynoms $Q(v)$ erhalten wir%
\[
f=-k,\quad \delta \,%
\acute{}%
\,=k\, \lambda+1=0,
\]
d.h. $\Phi$ muss eine Edlinger-Fl\"{a}che sein.\smallskip \newline Es sei
$n=-3$ und $k_{1}$ die Hauptkr\"{u}mmung, die die Gestalt (11) besitzt, d.h.
\[
k_{1}=f(u)\,w^{-3}.
\]
Dann erhalten wir aus (8) f\"{u}r die andere Hauptkr\"{u}mmung%

\[
k_{2}=f^{\ast}(u)\,w^{-1}\text{ mit \ }f^{\ast}(u):=\frac{-\delta^{2}%
(u)}{f(u)},
\]
d.h. eine Hauptkr\"{u}mmung von $\Phi$ hat die Gestalt (11), wobei $n=-1$ ist.
Wie wir im vorangehenden Fall gesehen haben, ist $\Phi$ wieder eine
Edlinger-Fl\"{a}che.\smallskip \newline Der Fall $n\leq-5$ f\"{u}hrt auf einen
Widerspruch, wie man leicht feststellen kann.\medskip \newline \textit{Fall II}:
Es sei $n$ gerade. F\"{u}r $n=-2$ folgt aus (13)%
\[
Q(v):=f^{2}[k\,v^{2}+\delta \,%
\acute{}%
\,v+\delta^{2}(k+\lambda)]^{2}-(f^{2}-\delta^{2})^{2}(v^{2}+\delta^{2}%
)=0\quad \forall \;u\in I,\;v\in%
\mathbb{R}
.
\]
Das Verschwinden des Koeffizienten $f^{2}k^{2}$ der Potenz h\"{o}chsten Grades
des Polynoms $Q(v)$ liefert $k=0$. Aus dem Verschwinden der \"{u}brigen
Koeffizienten folgt%
\[
f^{2}\, \delta \,%
\acute{}%
\,^{2}-(f^{2}-\delta^{2})^{2}=2f^{2}\, \delta^{2}\, \delta \,%
\acute{}%
\, \lambda=f^{2\,}\delta^{4\,}\lambda^{2}-\delta^{2}(f^{2}-\delta^{2})^{2}=0,
\]
woraus wir
\[
\delta \,\acute{} \,=\lambda=0 \quad \text{und} \quad f=\pm \, \delta
\]
erhalten. Es handelt sich also in diesem
Fall um eine Wendelfl\"{a}che.\smallskip \newline Die F\"{a}lle $n\geq2$ und
$n\leq4$ f\"{u}hren auf Widerspr\"{u}che, wie man leicht zeigen
kann.\smallskip \newline Wir fassen unsere Ergebnisse zusammen:

\begin{proposition}
Es sei $\Phi \subset%
\mathbb{R}
^{3}$ eine windschiefe $C^{3}$-Regelfl\"{a}che, deren eine Hauptkr\"{u}mmung
die Gestalt (11) besitzt. Dann gilt eine der folgenden Eigenschaften:\newline%
(a)$\quad n=-1$, $f(u)=-k(u)$ und $\Phi$ ist eine Edlinger-Fl\"{a}che.\newline%
(b)$\quad n=-2$, $f(u)=\pm \, \delta(u)$ und $\Phi$ ist eine
Wendelfl\"{a}che.\newline(c)$\quad n=-3$, $f(u)=\delta^{2}(u)\,k^{-1}(u)$ und
$\Phi$ ist eine Edlinger-Fl\"{a}che.
\end{proposition}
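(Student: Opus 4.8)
The plan is to translate the hypothesis "$k_i = f(u)\,w^n$ is a principal curvature" into the statement that $f(u)\,w^n$ is an \emph{extremal} value of the normal curvature (12). At a fixed point the normal curvature attains each value strictly between the two principal curvatures in two directions and each principal value in a single direction; hence $f\,w^n$ is principal precisely when the equation $k_N(du:dv)=f\,w^n$, after clearing the denominator in (12), has a \emph{double} root in the direction $du:dv$. Writing that quadratic out (this is the displayed equation preceding (13)) and setting its discriminant to zero produces the identity (13), which must hold for all $u\in I$ and all $v\in\mathbb{R}$. This is the single conceptual step; everything afterwards is extraction of consequences.

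The core of the argument is then to read (13) as a condition polynomial in $v$, using $w^2=v^2+\delta^2$, that holds for infinitely many $v$ at each fixed $u$, so that every coefficient must vanish. The decisive split is the parity of $n$. For odd $n$ both exponents $\tfrac{n+1}{2}$ and $n+2$ are integral, so (13) is already a polynomial (or, after clearing, a rational) identity in $v$; for even $n$ the middle summand carries $w^{n+1}$ with $n+1$ odd, i.e.\ a surviving factor $\sqrt{v^2+\delta^2}$, which I would isolate and then remove by squaring, as in the passage defining $Q(v)$ for $n=-2$. In either parity class I would first inspect the extreme (highest- or lowest-degree) coefficient in $v$: for $|n|$ large it forces $f\equiv 0$, which contradicts $f\neq0$ (true because $\Phi$ is skew). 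This eliminates all but finitely many exponents, namely $n\in\{-3,-2,-1,0\}$, and already disposes of $n\ge 1$, $n\le -5$ (odd) and $n\ge 2$, $n\le -4$ (even).

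It remains to treat the small exponents. The case $n=0$ is discarded directly: comparison with the zero polynomial gives $f=0$. For $n=-1$ the vanishing of all coefficients of the resulting polynomial yields $f=-k$ together with $\delta'=0$ and $k\lambda+1=0$, so by (9) $\Phi$ is an Edlinger surface, which is case (a). For $n=-2$ the leading coefficient forces $k=0$, and the remaining coefficients give $\delta'=\lambda=0$ and $f=\pm\delta$, i.e.\ a helicoid, which is case (b). For $n=-3$ I would avoid redoing the coefficient comparison and instead use $k_1k_2=K=-\delta^2 w^{-4}$ from (8): if $k_1=f\,w^{-3}$ then $k_2=(-\delta^2/f)\,w^{-1}$ is again of the form (11), now with exponent $-1$, so the already-settled case applies and $\Phi$ is once more an Edlinger surface, which is case (c). Collecting the three surviving possibilities gives the statement.

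The main obstacle I anticipate is not conceptual but organizational: the bookkeeping in the coefficient comparisons, in particular keeping the half-integer powers of $w$ cleanly separated in the even-$n$ case and carrying out the "large $|n|$" eliminations by a uniform leading-coefficient argument rather than exponent by exponent. Once the admissible exponents are pinned down, the geometric identifications are immediate from (9) and (8).
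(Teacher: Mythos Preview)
Your proposal is correct and follows essentially the same route as the paper: derive the discriminant identity (13), split by the parity of $n$, eliminate large $|n|$ via the leading coefficient, and handle the surviving exponents $n\in\{-3,-2,-1,0\}$ individually, including the reduction of $n=-3$ to $n=-1$ via $k_1k_2=K$. The organizational concerns you flag (bookkeeping in the even-$n$ squaring step and the uniform leading-term argument) are exactly the points where the paper also proceeds case by case rather than by a single clean stroke.
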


Aus diesem Satz folgt unmittelbar folgendes

\begin{corollary}
Es sei $\Phi \subset%
\mathbb{R}
^{3}$ eine windschiefe $C^{3}$-Regelfl\"{a}che, zwischen deren
Hauptkr\"{u}mmungen die Beziehung%
\begin{equation}
\delta^{2\,}k%
\genfrac{}{}{0pt}{1}{3}{1}%
+k^{4}\,k_{2}=0
\end{equation}
bestehe. Dann ist $\Phi$ eine Edlinger-Fl\"{a}che.
\end{corollary}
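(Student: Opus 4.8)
The plan is to reduce the hypothesis to the classification already obtained in the preceding Proposition, by exhibiting one principal curvature of $\Phi$ in the normalized form (11) with exponent $n=-1$.

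First I would collect two facts that require no computation. Since $\Phi$ is skew we have $\delta\neq0$, and from (8) the product of the principal curvatures equals the Gaussian curvature,
\begin{equation*}
k_{1}k_{2}=K=-\delta^{2}w^{-4}\neq0 ,
\end{equation*}
so neither $k_{1}$ nor $k_{2}$ vanishes. I would also note that the conical curvature satisfies $k(u)\neq0$ throughout $I$: if $k(u_{0})=0$, then along the ruling $u=u_{0}$ the hypothesis (15) degenerates to $\delta^{2}k_{1}^{3}=0$, forcing $k_{1}=0$ there and hence $K=k_{1}k_{2}=0$, contradicting $K\neq0$.

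The computational core is then short. Solving (15) for the second principal curvature gives $k_{2}=-\delta^{2}k_{1}^{3}k^{-4}$, and inserting this into $k_{1}k_{2}=-\delta^{2}w^{-4}$ makes the factor $-\delta^{2}$ cancel, leaving the pure fourth-power identity
\begin{equation*}
k_{1}^{4}=k^{4}w^{-4},\qquad\text{i.e.}\qquad (k_{1}w)^{2}=k^{2}.
\end{equation*}
Writing $f:=k_{1}w$, this says $f^{2}=k^{2}$. As $f$ is continuous (the principal curvatures of a $C^{3}$ ruled surface with $K<0$ are nowhere umbilic, hence continuous), $k$ is continuous and nowhere zero, and $I$ is an interval, the ratio $f/k\in\{+1,-1\}$ must be constant on $I$; thus $f=\pm k\in C^{0}(I)$ depends on $u$ only and
\begin{equation*}
k_{1}=f(u)\,w^{-1},
\end{equation*}
which is exactly the form (11) with $n=-1$.

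It remains to apply the preceding Proposition: since a principal curvature of $\Phi$ has the form (11), one of the cases (a)--(c) holds. Cases (a) and (c) each assert that $\Phi$ is an Edlinger surface, whereas case (b) is the helicoid and forces $k\equiv0$, which we have already excluded. Therefore $\Phi$ is an Edlinger surface, as claimed. I do not expect a real obstacle here; the only steps needing care are the division by $k$ (handled by ruling out $k=0$) and the constancy of the sign in $f=\pm k$ (handled by continuity on the connected interval $I$). The decisive step is the cancellation of $-\delta^{2}$, which collapses the cubic relation (15) into $k_{1}^{4}=k^{4}w^{-4}$ and thereby brings the problem squarely under the Proposition.
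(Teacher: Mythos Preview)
Your argument is correct and follows the same route as the paper: combine the hypothesis with $k_1k_2=K=-\delta^2 w^{-4}$ to obtain $k_1^4=k^4w^{-4}$, deduce $k_1=\pm k\,w^{-1}$, and invoke Proposition~1 with $n=-1$. You are in fact more careful than the paper on two points it leaves implicit: you justify $k\neq0$ on all of $I$ (needed to make sense of the quotient and to exclude the helicoid case), and you use continuity on the connected parameter domain to fix the sign in $k_1=\pm k\,w^{-1}$, so that $f=k_1w$ is genuinely a function of $u$ alone as required by~(11). One minor simplification: once you have $k_1=f(u)\,w^{-1}$ with $n=-1$, Proposition~1 forces case~(a) directly (cases (b) and (c) have $n=-2$ and $n=-3$), so the separate exclusion of (b) and (c) is not needed.
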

\begin{proof}
Auf Grund von (8) und (15) folgt
\[
k\genfrac{}{}{0pt}{1}{4}{1}=k^{4}\,w^{-4}
\]
und daraus
\[
k_{1}=\pm \,k\,w^{-1}.
\]
Wegen Satz 1 ist aber
\[
k_{1}=-k\,w^{-1}.
\]
Somit besitzt die Hauptkr\"{u}mmung $k_{1}$ die Gestalt
(11), wobei $n=-1$ ist. Daher ist $\Phi$ eine Edlinger-Fl\"{a}che.
\end{proof}

\section{Der Fall der Normalkr\"{u}mmung}

Dar\"{u}er hinaus betrachten wir weitere geometrisch ausgezeichnete
Kurvenscharen auf $\Phi$, l\"{a}ngs deren die Normalkr\"{u}mmung die Gestalt%
\begin{equation}
k_{N}=f(u)w^{n},\;n\in%
\mathbb{Z}
,\;f(u)\in C^{0}(I)
\end{equation}
besitzt. Unser Ziel ist die Bestimmung dieser Regelfl\"{a}chen.\smallskip
\medskip

\noindent \textbf{3.1. }Es sei $S_{1}$ \textit{die Schar der Kurven konstanten
Striktionsabstandes}. Nach (12) lautet die Normalkr\"{u}mmung l\"{a}ngs einer
Kurve von $S_{1}$%
\begin{equation}
k_{N}=\frac{1}{w}\cdot \frac{-k\,v^{2}-\delta \,%
\acute{}%
\,v+\delta^{2}\left(  k-\lambda \right)  }{v^{2}+\delta^{2}\left(  \lambda
^{2}+1\right)  }%
\end{equation}
und hat genau dann die Gestalt (16), wenn%
\begin{equation}
f\,w^{n+1\,}[v^{2}+\delta^{2}(\lambda^{2}+1)]+k\,v^{2}+\delta \,%
\acute{}%
\,v+\delta^{2}(k-\lambda)=0\text{.}%
\end{equation}
Offensichtlich ist genau dann $f=0$, wenn%
\begin{equation}
k\,v^{2}+\delta \,%
\acute{}%
\,v+\delta^{2}(k-\lambda)=0
\end{equation}
identisch in $v$ gilt, d.h. genau dann, wenn
\[
k=\delta \,\acute{}\,=\lambda=0,
\]
also wenn $\Phi$ eine Wendelfl\"{a}che ist.\smallskip \newline
Es sei nun $f\neq0$. F\"{u}r $n=-1$ folgt aus (18)%
\[
Q(v):=f[v^{2}+\delta^{2}(\lambda^{2}+1)]+k\,v^{2}+\delta \,%
\acute{}%
\,v+\delta^{2}(k-\lambda)=0.
\]
Das Verschwinden der Koeffizienten des Polynoms $Q(v)$ liefert%
\[
f=-k,\quad \delta \,%
\acute{}%
\,=0,\quad \lambda(k\lambda+1)=0.
\]
Somit ist die Regelfl\"{a}che $\Phi$ entweder ein konstant gedralltes Orthoid
($\delta \,%
\acute{}%
\,=\lambda=0$) oder eine Edlinger-Fl\"{a}che ($\delta \,%
\acute{}%
\,=k\lambda+1=0$).\smallskip \newline F\"{u}r $n>-1$ folgt aus (18)%
\[
Q(v):=f^{2}(v^{2}+\delta^{2})^{n+1}[v^{2}+\delta^{2}(\lambda^{2}%
+1)]^{2}-[k\,v^{2}+\delta \,%
\acute{}%
\,v+\delta^{2}(k-\lambda)]^{2}=0.
\]
Aus dem Verschwinden des Koeffizienten des h\"{o}chsten Grades des Polynoms
$Q(v)$ folgt $f=0$. Dies ist aber nicht m\"{o}glich.\smallskip \newline F\"{u}r
$n<-1$ folgt aus (18)%
\[
Q(v):=(v^{2}+\delta^{2})^{-n-1}[k\,v^{2}+\delta \,%
\acute{}%
\,v+\delta^{2}(k-\lambda)]^{2}-f^{2}[v^{2}+\delta^{2}(\lambda^{2}+1)]^{2}=0.
\]
Das Verschwinden des Koeffizienten des h\"{o}chsten Grades des Polynoms $Q(v)$
liefert $k=0$ und demnach%
\begin{equation}
Q(v)=(v^{2}+\delta^{2})^{-n-1}(\delta \,%
\acute{}%
\,v-\delta^{2}\, \lambda)^{2}-f^{2}[v^{2}+\delta^{2}(\lambda^{2}+1)]^{2}=0.
\end{equation}
Ist $n=-2$, so erh\"{a}lt $Q(v)$ die Gestalt%
\begin{equation}
Q(v)=(v^{2}+\delta^{2})(\delta \,%
\acute{}%
\,v-\delta^{2}\lambda)^{2}-f^{2}[v^{2}+\delta^{2}(\lambda^{2}+1)]^{2}=0.
\end{equation}
Aus dem Verschwinden der Koeffizienten von $Q(v)$ folgt $f=0$, also ein
Widerspruch.\newline F\"{u}r $n<-2$ liefert das Verschwinden des Koeffizienten
des h\"{o}chsten Grades des Polynoms $Q(v)$ in (20) $\delta \,%
\acute{}%
\,=0$ und daher%
\begin{equation}
Q(v)=\delta^{4}\lambda^{2}(v^{2}+\delta^{2})^{-n-1}-f^{2}[v^{2}+\delta
^{2}(\lambda^{2}+1)]^{2}=0.
\end{equation}
F\"{u}r $n=-3$ ist%
\[
Q(v)=\delta^{4}\lambda^{2}(v^{2}+\delta^{2})^{2}-f^{2}[v^{2}+\delta
^{2}(\lambda^{2}+1)]^{2}=0.
\]
Aus dem Verschwinden der Koeffizienten des Polynoms $Q(v)$ folgt wieder $f=0$,
also ein Widerspruch.\smallskip \newline F\"{u}r $n<-3$ erhalten wir aus (22)
\[
\lambda=f=0,
\]
was ebenfalls unm\"{o}glich ist.\smallskip \newline Somit ist
folgender Satz bewiesen:

\begin{proposition}
Die Normalkr\"{u}mmung l\"{a}ngs der Kurven konstanten Striktionsabstandes
einer windschiefen $C^{3}$-Regelfl\"{a}che $\Phi \subset%
\mathbb{R}
^{3}$ besitze die Gestalt (16). Dann gilt eine der folgenden
Eigenschaften:\newline(a)\quad$f=0$ und $\Phi$ ist eine
Wendelfl\"{a}che.\newline(b)\quad$n=-1,f(u)=-k(u)$ und $\Phi$ ist ein konstant
gedralltes Orthoid oder eine Edlinger-Fl\"{a}che.\medskip
\end{proposition}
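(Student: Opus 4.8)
The starting point is the expression (17) for the normal curvature along the curves $v=\text{const}$. Requiring $k_{N}=f(u)w^{n}$ and clearing the denominator $v^{2}+\delta^{2}(\lambda^{2}+1)$ produces the condition (18), which for each fixed $u\in I$ must hold for all $v\in\mathbb{R}$. The whole plan is to read (18) as an identity in the real variable $v$: since it is satisfied at infinitely many points, every polynomial extracted from it must vanish identically, so coefficient comparison is legitimate. Throughout I use that $\Phi$ is skew (windschief), hence $\delta(u)\neq0$, which permits division by powers of $\delta$.

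First I dispose of the two cases in which (18) is already polynomial in $v$. If $f\equiv0$, then (18) collapses to (19), namely $kv^{2}+\delta' v+\delta^{2}(k-\lambda)=0$; comparing the coefficients of $v^{2},v^{1},v^{0}$ and using $\delta\neq0$ forces $k=\delta'=\lambda=0$, so $\Phi$ is a helicoid, which is alternative (a). If $f\not\equiv0$ and $n=-1$, then $w^{n+1}=1$ and (18) becomes the quadratic $(f+k)v^{2}+\delta' v+[\,f\delta^{2}(\lambda^{2}+1)+\delta^{2}(k-\lambda)\,]=0$; its three coefficients yield $f=-k$, $\delta'=0$, and, after substituting $f=-k$, the relation $\lambda(k\lambda+1)=0$. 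This splits into $\delta'=\lambda=0$ (a constant-pitch orthoid) and $\delta'=k\lambda+1=0$ (an Edlinger surface by (9)), which together are alternative (b).

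It remains to exclude every $n\neq-1$, and this is the technical heart of the argument, because for such $n$ the factor $w^{n+1}=(v^{2}+\delta^{2})^{(n+1)/2}$ is not a polynomial in $v$. The plan is to isolate this factor, square the equation, and then multiply through by the power of $w^{2}=v^{2}+\delta^{2}$ that removes all remaining negative or half-integer exponents, producing a genuine polynomial identity in $v$. For $n>-1$ this gives $f^{2}(v^{2}+\delta^{2})^{n+1}[v^{2}+\delta^{2}(\lambda^{2}+1)]^{2}=[kv^{2}+\delta' v+\delta^{2}(k-\lambda)]^{2}$, whose left-hand side has degree $2n+6>4$ with leading coefficient $f^{2}$; hence $f=0$, a contradiction. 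For $n<-1$ I multiply instead by $(v^{2}+\delta^{2})^{-n-1}$ to reach (20), and then run a cascade on the top coefficient: it first forces $k=0$, then (for $n<-2$) $\delta'=0$, and then (for $n<-3$) $\lambda=0$, each step lowering the effective degree; the borderline exponents $n=-2$ and $n=-3$ are settled by direct comparison of the degree-four coefficients, as in (21) and (22). Every one of these branches terminates in $f=0$, contradicting $f\not\equiv0$.

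The one genuine obstacle is the square root hidden in $w$: the delicate point is to select, for each sign and parity of $n+1$, the exact power of $v^{2}+\delta^{2}$ by which to multiply after squaring, so that the coefficient comparison is rigorous rather than merely formal. Squaring introduces no spurious cases here, since in every branch except $f\equiv0$ and $n=-1$ it only has to produce a contradiction, while the two surviving branches are confirmed directly from the polynomial form of (18). Assembling the surviving possibilities then gives precisely the two alternatives (a) and (b).
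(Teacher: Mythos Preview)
Your proof is correct and follows essentially the same route as the paper: you derive the identity (18), handle $f=0$ and $n=-1$ directly as polynomial identities in $v$, and for all other $n$ square (18) and clear the appropriate power of $v^{2}+\delta^{2}$ to obtain a genuine polynomial identity, whose leading coefficients force $f=0$ (possibly after the cascade $k=0$, $\delta'=0$, $\lambda=0$). The only cosmetic slip is that what you call ``(20)'' is actually the displayed $Q(v)$ just above (20) in the paper, with (20) being the form after $k=0$ has already been substituted.
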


\noindent \textbf{3.2.} Es sei $S^{2}$ \textit{die Schar der
Orthogonaltrajektorien der Schar} $S^{1}$. Sie ist durch%
\[
\lbrack v^{2}+\delta^{2}(\lambda^{2}+1)]\,du+\delta \, \lambda \,dv=0
\]
bestimmt. Nach (12) lautet die zugeh\"{o}rige Normalkr\"{u}mmung%
\begin{equation}
k_{N}=\frac{1}{w^{3}}\cdot \frac{-\delta^{2}\lambda \, \left[  \left(  k\,
\lambda+2\right)  v^{2}+\delta \,%
\acute{}%
\, \lambda \,v+\delta^{2}\left(  \lambda^{2}+k\, \lambda+2\right)  \right]
}{v^{2}+\delta^{2}\left(  \lambda^{2}+1\right)  }%
\end{equation}
und hat genau dann die Gestalt (16), wenn gilt%
\begin{equation}
f\,w^{n+3}[v^{2}+\delta^{2}(\lambda^{2}+1)]+\delta^{2}\lambda \lbrack(k\,
\lambda+2)v^{2}+\delta \,%
\acute{}%
\, \lambda \,v+\delta^{2}(\lambda^{2}+k\, \lambda+2)]=0.
\end{equation}
Es ist $f=0$ genau dann, wenn%
\[
\lambda \,[(k\, \lambda+2)v^{2}+\delta \,%
\acute{}%
\, \lambda \,v+\delta^{2}(\lambda^{2}+k\, \lambda+2)]=0
\]
identisch in $v$ gilt. W\"{a}re $\lambda \neq0$, so w\"{u}rden wir folgern%
\[
k\, \lambda+2=\delta \,%
\acute{}%
\, \lambda=\delta^{2}(\lambda^{2}+k\, \lambda+2)=0,
\]
was nicht m\"{o}glich ist. Somit ist $f=0$ genau dann, wenn \thinspace
$\lambda=0$, d.h. wenn $\Phi$ ein Orthoid ist.\smallskip \newline Es sei nun
$f\lambda \neq0.$ F\"{u}r $n=-3$ folgt aus (24)%
\begin{equation}
Q(v):=f\,[v^{2}+\delta^{2}(\lambda^{2}+1)]+\delta^{2}\lambda \,[(k\,
\lambda+2)v^{2}+\delta \,%
\acute{}%
\, \lambda \,v+\delta^{2}(\lambda^{2}+k\, \lambda+2)]=0.
\end{equation}
Aus dem Verschwinden der Koeffizienten des Polynoms Q(v) erhalten wir%
\[
f=-\delta^{2}\lambda \,(k\, \lambda+2),\quad \delta \,%
\acute{}%
\,=0,\quad k\, \lambda+1=0,
\]
somit haben wir \thinspace$f=-\delta^{2}\, \lambda$ und die Regelfl\"{a}che
$\Phi$ ist eine Edlinger-Fl\"{a}che.\smallskip \newline Man \"{u}berzeugt sich
leicht, dass die F\"{a}lle $n>-3$ und $n<-3$ auf Widerspr\"{u}che f\"{u}hren.
Somit gilt der

\begin{proposition}
Die Normalkr\"{u}mmung l\"{a}ngs der Orthogonaltrajektorien der Kurven
konstanter Striktionsabstandes einer windschiefen $C^{3}$-Regelfl\"{a}che
$\Phi \subset%
\mathbb{R}
^{3}$ besitze die Gestalt (16). Dann gilt eine der folgenden
Eigenschaften:\newline(a)$\quad f=0$ und $\Phi$ ist ein Orthoid.\newline%
(b)$\quad n=-3$, $f(u)=\delta^{2}(u)\,k^{-1}(u)$ und $\Phi$ ist eine
Edlinger-Fl\"{a}che.\medskip
\end{proposition}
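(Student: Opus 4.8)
The plan is to start from the closed form~(23) of the normal curvature along the orthogonal trajectories $S^{2}$ and to impose the prescribed shape~(16), $k_{N}=f(u)\,w^{n}$. Since $k,\delta,\delta',\lambda,f$ are functions of $u$ alone while $w=\sqrt{v^{2}+\delta^{2}}$, clearing the common denominator $v^{2}+\delta^{2}(\lambda^{2}+1)$ converts the requirement into the single relation~(24), which must hold identically in $v$ for every fixed $u\in I$. The entire proof then amounts to reading~(24) as an identity in $v$ and comparing coefficients, the cases being organised by the integer $n$.

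First I would treat the degenerate branch $f=0$. There~(24) collapses to $\lambda\,[(k\lambda+2)v^{2}+\delta'\lambda v+\delta^{2}(\lambda^{2}+k\lambda+2)]\equiv 0$. If $\lambda\neq 0$, equating the three coefficients to zero would force both $k\lambda+2=0$ and $\lambda^{2}+k\lambda+2=0$, hence $\lambda^{2}=0$, a contradiction; thus $\lambda=0$ and $\Phi$ is an Orthoid, which is alternative~(a).

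On the branch $f\lambda\neq 0$ the decisive split is governed by the factor $w^{n+3}=(v^{2}+\delta^{2})^{(n+3)/2}$, whose exponent vanishes exactly when $n=-3$. This is the only value for which~(24) is already a genuine polynomial in $v$, namely~(25). Comparing the coefficients of $v^{2}$, $v^{1}$ and $v^{0}$ in~(25) yields $f=-\delta^{2}\lambda(k\lambda+2)$, then $\delta'=0$ (using $\delta\lambda\neq 0$), and finally $k\lambda+1=0$; the latter two are precisely the Edlinger conditions~(9), and substituting $k\lambda=-1$ gives $f=-\delta^{2}\lambda=\delta^{2}k^{-1}$. This is alternative~(b).

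It remains to rule out every $n\neq -3$, and this is where the only real work lies. For such $n$ I would isolate the irrational term $f\,w^{n+3}[v^{2}+\delta^{2}(\lambda^{2}+1)]$ in~(24), square, and multiply by an appropriate non-negative power of $v^{2}+\delta^{2}$ so as to obtain a true polynomial identity $Q(v)\equiv 0$. For $n>-3$ the leading coefficient of $Q$ is $f^{2}$, forcing $f=0$ against $f\lambda\neq 0$. For $n<-3$ the leading coefficient is $\delta^{4}\lambda^{2}(k\lambda+2)^{2}$, so $k\lambda+2=0$; feeding this back and examining the next coefficients then closes each of these subcases by contradiction. The main obstacle is therefore not conceptual but bookkeeping: tracking the leading coefficients after squaring and verifying that every $n<-3$ genuinely collapses rather than producing a spurious surface, while throughout using $\delta\neq 0$ (skewness) and, on this branch, $\lambda\neq 0$ to justify each division.
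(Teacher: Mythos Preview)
Your proposal is correct and follows essentially the same route as the paper: the paper derives~(24), handles $f=0$ by the same coefficient contradiction to force $\lambda=0$ (Orthoid), treats $n=-3$ via~(25) exactly as you do to obtain the Edlinger conditions $\delta'=k\lambda+1=0$ and $f=-\delta^{2}\lambda=\delta^{2}k^{-1}$, and then simply asserts that the remaining cases $n>-3$ and $n<-3$ lead to contradictions. Your squaring-and-leading-coefficient argument for those residual cases is precisely the technique the paper uses in the parallel situations of Sections~3.1 and~3.3, so your fill-in is the intended one.
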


\noindent \textbf{3.3.} Es sei $S^{3}$ \textit{die Schar der
Orthogonaltrajektorien der Erzeugenden}, die durch%
\[
\delta \, \lambda \,du+dv=0
\]
bestimmt ist. Nach (12) lautet die zugeh\"{o}rige Normalkr\"{u}mmung%
\begin{equation}
k_{N}=\frac{-k\,v^{2}-\delta \,%
\acute{}%
\,v-\delta^{2}\left(  k+\lambda \right)  }{w^{3}}%
\end{equation}
und hat genau dann die Gestalt (16), wenn gilt%
\begin{equation}
f\,w^{n+3}+k\,v^{2}+\delta \,%
\acute{}%
\,v+\delta^{2}(k+\lambda)=0.
\end{equation}
Es ist $f=0$ genau dann, wenn (19) identisch in $v$ gilt, d.h. genau dann,
wenn
\[
k=\lambda=\delta \,%
\acute{}%
\,=0,
\]
also wenn $\Phi$ eine Wendelfl\"{a}che ist.\smallskip \newline Es sei
nun $f\neq0$. F\"{u}r $n=-3$ folgt aus (27)%
\begin{equation}
Q(v):=k\,v^{2}+\delta \,%
\acute{}%
\,v+\delta^{2}(k+\lambda)+f=0.
\end{equation}
Das Verschwinden der Koeffizienten des Polynoms $Q(v)$ liefert%

\[
f=-\delta^{2}\, \lambda,\quad k=0,\quad \delta \,%
\acute{}%
\,=0.
\]
Somit ist die Regelfl\"{a}che $\Phi$ ein konstant gedralltes Konoid.\smallskip
\newline F\"{u}r $n=-2$ folgt aus (27)%
\[
Q(v):=f^{2}(v^{2}+\delta^{2})-[k\,v^{2}+\delta \,%
\acute{}%
\,v+\delta^{2}(k+\lambda)]^{2}=0.
\]
Aus dem Verschwinden der Koeffizienten des Polynoms $Q(v)$ folgt%
\[
k=0,\quad f^{2}=\delta \,%
\acute{}%
\,^{2},\quad \delta \,%
\acute{}%
\, \lambda=0,\quad f^{2}=\delta^{2}\lambda^{2}=0,
\]
also $f=0$, d.h. ein Widerspruch.\smallskip \newline F\"{u}r $n=-1$ folgt aus
(27)%
\[
Q(v):=f(v^{2}+\delta^{2})+k\,v^{2}+\delta \,%
\acute{}%
\,v+\delta^{2}(k+\lambda)=0.
\]
Aus dem Verschwinden der Koeffizienten des Polynoms $Q(v)$ folgt
$f=-k,\delta \,%
\acute{}%
\,=0,\lambda=0$. Somit ist $\Phi$ ein konstant gedralltes Orthoid.\smallskip
\newline Die F\"{a}lle $n\geq0$ und $n\leq-4$ f\"{u}hren auf Widerspr\"{u}che,
wie man leicht zeigen kann. Daher gilt:

\begin{proposition}
Die Normalkr\"{u}mmung l\"{a}ngs der Orthogonaltrajektorien der Erzeugenden
einer windschiefen $C^{3}$-Regelfl\"{a}che $\Phi \in%
\mathbb{R}
^{3}$ besitze die Gestalt (16). Dann gilt eine der folgenden
Eigenschaften:\newline(a)$\quad f=0$ und $\Phi$ ist eine
Wendelfl\"{a}che.\newline(b)$\quad n=-3$, $f(u)=-\delta^{2}(u)\, \lambda(u)$
und $\Phi$ ist ein konstant gedralltes Konoid.\newline(c)$\quad n=-1$,
$f(u)=-k(u)$ und $\Phi$ ist ein konstant gedralltes Orthoid.\medskip
\end{proposition}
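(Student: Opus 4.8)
The plan is to compute the normal curvature along $S^{3}$ in closed form, convert the requirement (16) into a single identity in $v$, and then read off the admissible invariants by comparing polynomial coefficients. First I would specialize the general formula (12) to $S^{3}$: substituting the defining relation $dv=-\delta\lambda\,du$ into (12), the denominator collapses to $w^{2}\,du^{2}$ while the numerator reduces to $-[kv^{2}+\delta'v+\delta^{2}(k+\lambda)]\,du^{2}$, which gives the closed expression (26). Imposing $k_{N}=f\,w^{n}$ and clearing the factor $w^{3}$ then produces the governing identity (27),
\[
f\,w^{n+3}+kv^{2}+\delta'v+\delta^{2}(k+\lambda)=0\qquad\forall\,u\in I,\ v\in\mathbb{R},
\]
so the whole proposition amounts to describing its solutions.

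Next I would settle the degenerate branch. If $f=0$, the identity is the quadratic $kv^{2}+\delta'v+\delta^{2}(k+\lambda)=0$ in $v$, whose three coefficients force $k=\delta'=0$ and hence $\lambda=0$; by the standard characterization this is precisely a helicoid, yielding alternative (a). For $f\neq0$ the real obstacle appears: $w^{n+3}=(v^{2}+\delta^{2})^{(n+3)/2}$ is a genuine polynomial in $v$ only when $n$ is odd, and otherwise carries the root $w=\sqrt{v^{2}+\delta^{2}}$. My device is to split on the parity of $n$ — for odd $n$ I compare coefficients in (27) directly, and for even $n$ I isolate the single surviving factor $w$ and square, reducing to a polynomial identity in both cases. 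A degree count then shows that the coefficient of the top power is a nonzero multiple of $f$ (resp. $f^{2}$) whenever $n+3>2$, forcing $f=0$ for all $n\ge0$; and for $n\le-4$ the analogous leading-coefficient comparison forces the quadratic bracket to vanish identically, which again drives $f$ to zero. Hence only $n\in\{-3,-2,-1\}$ can survive.

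The three surviving exponents are then handled individually. For $n=-3$ we have $w^{n+3}=1$, so (27) is the quadratic (28) $kv^{2}+\delta'v+\delta^{2}(k+\lambda)+f=0$; vanishing of its coefficients gives $k=\delta'=0$ and $f=-\delta^{2}\lambda$, i.e. a constantly pitched conoid — alternative (b). For $n=-1$ we have $w^{n+3}=v^{2}+\delta^{2}$, again a quadratic in $v$, whose coefficients give $f=-k$, $\delta'=0$ and $\lambda=0$, i.e. a constantly pitched orthoid — alternative (c). For $n=-2$ one squares $f\,w=-[kv^{2}+\delta'v+\delta^{2}(k+\lambda)]$; the top coefficient forces $k=0$, and the remaining coefficients are then incompatible with $f\neq0$, so this case is empty. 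Collecting (a), (b) and (c) completes the argument.

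I expect the single delicate point to be the uniform control of the radical $w$ for arbitrary integer $n$; once the parity split and the squaring reduce every case to a polynomial identity, the rest is routine coefficient comparison, and the geometric identifications follow at once from the standard characterizations of helicoid, constantly pitched conoid and constantly pitched orthoid recalled in the introduction.
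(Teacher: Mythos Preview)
Your argument is correct and follows essentially the same route as the paper: you derive (26) by specializing (12) along $dv=-\delta\lambda\,du$, rewrite the hypothesis as the identity (27), dispose of $f=0$ by reading off $k=\delta'=\lambda=0$, and then treat $n=-3,-2,-1$ individually via coefficient comparison (squaring in the even case $n=-2$), exactly as the paper does. The only difference is cosmetic: where the paper dismisses $n\ge 0$ and $n\le -4$ in a single sentence, you spell out the parity split and the squaring device that reduce those ranges to polynomial identities whose leading coefficient is a nonzero multiple of $f$ (resp.\ forces the quadratic bracket to vanish term by term), which is precisely the computation the paper leaves to the reader.
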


\noindent \textbf{3.4.} Es sei $S^{4}$ \textit{die Schar der Kurven konstanter
Gau\ss scher Kr\"{u}mmung}, d.h. die Schar der Kurven l\"{a}ngs denen die
Gau\ss sche Kr\"{u}mmung konstant ist \cite{Sachs}. Sie ist durch%
\[
\delta \,%
\acute{}%
\,(\delta^{2}-v^{2})\,du+2\delta \,v\,dv=0
\]
bestimmt. Nach (12) lautet die zugeh\"{o}rige Normalkr\"{u}mmung
\begin{equation}
k_{N}=\frac{-1}{w}\cdot \frac{4\delta^{2}\,v\left[  k\,v^{3}+\delta^{2}\left(
k-\lambda \right)  v+\delta^{2}\, \delta \,%
\acute{}%
\, \right]  }{A}%
\end{equation}
wobei
\[
A=\left(  4\delta^{2}+\delta \,%
\acute{}%
\,^{2}\right)  v^{4}+4\delta^{2\,}\delta \,%
\acute{}%
\lambda \,v^{3}+2\delta^{2}\left[  2\delta^{2}\left(  \lambda^{2}+1\right)
-\delta \,%
\acute{}%
\,^{2}\right]  v^{2}-4\delta^{4\,}\delta \,%
\acute{}%
\, \lambda \,v+\delta^{4\,}\delta \,%
\acute{}%
\,^{2}%
\]
gesetzt wurde, und hat genau dann die Gestalt (16), wenn gilt:%
\begin{align}
&  \text{ }f\,w^{n+1}[(4\delta^{2}+\delta \,%
\acute{}%
\,^{2})v^{4}+4\delta^{2}\, \delta \,%
\acute{}%
\, \lambda \,v^{3}+2\delta^{2}[2\delta^{2}(\lambda^{2}+1)-\delta \,%
\acute{}%
\,^{2}]v^{2}\nonumber \\
&  -4\delta^{4\,}\delta \,%
\acute{}%
\, \lambda \,v+\delta^{4\,}\delta \,%
\acute{}%
\,^{2}]+4\delta^{2}v\,[k\,v^{3}+\delta^{2}(k-\lambda)v+\delta^{2}\delta \,%
\acute{}%
\,]=0.
\end{align}
Es ist $f=0$ genau dann, wenn%
\[
k\,v^{3}+\delta^{2}(k-\lambda)v+\delta^{2\,}\delta \,%
\acute{}%
\,=0
\]
identisch in $v$ erf\"{u}llt ist, d.h. genau dann, wenn
\[
k=\lambda=\delta \,%
\acute{}%
\,=0,
\]
also wenn $\Phi$ eine Wendelfl\"{a}che ist.\smallskip \newline Es sei im
Folgenden $f\neq0$. F\"{u}r $n=-1$ folgt aus (30)%
\begin{align}
Q(v)  &  :=f\,[(4\delta^{2}+\delta \,%
\acute{}%
\,^{2})v^{4}+4\delta^{2}\, \delta \,%
\acute{}%
\, \lambda \,v^{3}+2\delta^{2\,}[2\delta^{2}(\lambda^{2}+1)-\delta \,%
\acute{}%
\,^{2}]v^{2}\nonumber \\
&  -4\delta^{4}\, \delta \,%
\acute{}%
\, \lambda \,v+\delta^{4}\delta \,%
\acute{}%
\,^{2}]+4\delta^{2}\,v[k\,v^{3}+\delta^{2}(k-\lambda)v+\delta^{2\,}\delta \,%
\acute{}%
\,]=0.
\end{align}
Die Koeffizienten
\begin{align*}
a_{4}  &  :=f\,(4\delta^{2}+\delta \,%
\acute{}%
\,^{2})+4\delta^{2}k,\quad a_{3}:=4f\, \delta^{2}\, \delta \,%
\acute{}%
\, \lambda,\\
a_{2}  &  :=2f\, \delta^{2}[2\delta^{2}(\lambda^{2}+1)-\delta \,%
\acute{}%
\,^{2}]+4\delta^{4}(k-\lambda),\\
a_{1}  &  :=-4f\, \delta^{4}\delta \,%
\acute{}%
\, \lambda+4\delta^{4}\, \delta \,%
\acute{}%
\,,\quad a_{0}:=f\, \delta^{4\,}\delta \,%
\acute{}%
\,^{2}%
\end{align*}
des Polynoms $Q(v)$ m\"{u}ssen verschwinden. Aus $a_{0}=0$ folgt
\thinspace$\delta \,%
\acute{}%
\,=0$ und sodann aus $a_{2}=a_{4}=0$, dass
\[
f=-k, \quad \lambda\,(k\, \lambda+1)=0
\]
gilt. Folglich ist die Regelfl\"{a}che $\Phi$ entweder ein konstant gedralltes
Orthoid ($\delta \,%
\acute{}%
\,=\lambda=0$) oder eine Edlinger-Fl\"{a}che ($\delta \,%
\acute{}%
\,=k\, \lambda+1=0$).\smallskip \newline Die F\"{a}lle $n>-1$ und $n<-1$
f\"{u}hren auf Widerspr\"{u}che. Wir haben somit folgenden Satz bewiesen:

\begin{proposition}
Die Normalkr\"{u}mmung l\"{a}ngs der Kurven konstanten Gau\ss scher
Kr\"{u}mmung einer windschiefen $C^{3}$-Regelfl\"{a}che $\Phi \in%
\mathbb{R}
^{3}$ besitze die Gestalt (16). Dann gilt eine der folgenden
Eigenschaften:\newline(a)$\quad f=0$ und $\Phi$ ist eine
Wendelfl\"{a}che.\newline(b)$\quad n=-1$, $f(u)=-k(u)$ und $\Phi$ ist ein
konstant gedralltes Orthoid oder eine Edlinger-Fl\"{a}che.
\end{proposition}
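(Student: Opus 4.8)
The plan is to follow the scheme that already settled the three previous families, now specialised to $S^{4}$. First I would determine the direction $du:dv$ tangent to the curves of constant Gauss curvature: since $K=-\delta^{2}w^{-4}$ by (8), the requirement $dK=0$ along such a curve reads $K_{u}\,du+K_{v}\,dv=0$, and a short differentiation yields $K_{u}=2\delta\delta'(\delta^{2}-v^{2})w^{-6}$ together with $K_{v}=4\delta^{2}v\,w^{-6}$; clearing the common factor $2\delta\,w^{-6}$ gives the defining equation $\delta'(\delta^{2}-v^{2})\,du+2\delta v\,dv=0$. Feeding this direction into the general normal-curvature formula (12) produces the expression (29) for $k_{N}$ along $S^{4}$, with the quartic denominator $A$.

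Next I would impose the prescribed form (16), namely $k_{N}=f\,w^{n}$, and clear denominators to obtain the single master relation (30). For each fixed $u$ this must hold for infinitely many $v$, hence identically in $v$, so the rest of the argument is coefficient comparison. The case $f\equiv 0$ reduces (30) to $k\,v^{3}+\delta^{2}(k-\lambda)v+\delta^{2}\delta'=0$ identically in $v$, forcing $k=\lambda=\delta'=0$, i.e. a helicoid; this is alternative (a). For $f\neq 0$ the decisive exponent is $n=-1$, where $w^{n+1}=1$ turns (30) into the genuine quartic (31). Setting its five coefficients $a_{0},\dots,a_{4}$ equal to zero, I would follow the chain $a_{0}=f\delta^{4}(\delta')^{2}=0\Rightarrow\delta'=0$, then $a_{4}=0\Rightarrow f=-k$, and finally $a_{2}=0\Rightarrow\lambda(k\lambda+1)=0$. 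The two branches $\lambda=0$ and $k\lambda+1=0$, each combined with $\delta'=0$, describe exactly a constantly-pitched orthoid, respectively (by the Edlinger criterion (9)) an Edlinger-surface, which is alternative (b).

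The main obstacle is the exclusion of every other value of $n$. Because (30) carries the half-integer power $w^{n+1}$, I would split according to the parity of $n$, exactly as in Fall I and Fall II of Section 2: for $n$ odd the factor $w^{n+1}$ is already a polynomial in $v^{2}$, while for $n$ even I would isolate the $w^{n+1}$-term and square to rationalise it. In each situation the argument rests on the coefficient of the highest power of $v$ in the resulting polynomial; keeping track of which invariant that leading coefficient isolates is the only delicate bookkeeping, yet in every sub-case, both $n>-1$ and $n<-1$, it collapses to $f=0$, contradicting $f\neq 0$. Hence $n=-1$ is the sole surviving exponent, and the classification is complete.
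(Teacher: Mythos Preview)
Your proposal is correct and mirrors the paper's own argument essentially step for step: derive the direction field of $S^{4}$ from $dK=0$, insert it into (12) to obtain (29), impose (16) to reach the master identity (30), dispose of $f=0$ via the helicoid conditions, handle $n=-1$ by killing the coefficients $a_{0},\dots,a_{4}$ of (31) in the order $a_{0}\Rightarrow\delta'=0$, then $a_{4},a_{2}\Rightarrow f=-k$ and $\lambda(k\lambda+1)=0$, and finally assert that all other $n$ lead to contradictions. One small caution: for $n<-1$ the leading coefficient does not by itself force $f=0$ (it first forces $k=0$, and one must then descend through lower coefficients), so your phrase ``it collapses to $f=0$'' slightly understates the bookkeeping---but the paper gives no detail there either, so your sketch is at least as complete as the original.
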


Folgende Tabelle gibt eine \"{U}bersicht \"{u}ber die obigen
Resultate.\bigskip \newline%
\begin{tabular}
[c]{|c|c|c|c|}\hline
$%
\begin{tabular}
[c]{c}%
{\small Normalkr\"{u}mmung der }\\
{\small Gestalt }$k_{N}=f\,w^{n}$ {\small l\"{a}ngs}%
\end{tabular}
$ & ${\small f}$ & $n$ & {\small Art der Regelfl\"{a}che}\\ \hline \hline%
\begin{tabular}
[c]{c}%
{\small einer der }\\
{\small Kr\"{u}mmungslinien}%
\end{tabular}
&
\begin{tabular}
[c]{c}%
${\small -k}$\\
${\small \pm \delta}$\\
${\small \delta}^{2}{\small k}^{-1}$%
\end{tabular}
&
\begin{tabular}
[c]{c}%
${\small -1}$\\
${\small -2}$\\
${\small -3}$%
\end{tabular}
&
\begin{tabular}
[c]{c}%
$\cdot$\ {\small Edlinger-Fl\"{a}che}\\
$\cdot$\ {\small Wendelfl\"{a}che}\\
$\cdot$ {\small Edlinger-Fl\"{a}che}%
\end{tabular}
\\ \hline%
\begin{tabular}
[c]{c}%
{\small der Kurven konstanten}\\
{\small Striktionsabstandes}%
\end{tabular}
&
\begin{tabular}
[c]{c}%
${\small 0}$\\
${\small -k}$%
\end{tabular}
&
\begin{tabular}
[c]{c}%
{\small -}\\
${\small -1}$%
\end{tabular}
&
\begin{tabular}
[c]{c}%
$\cdot$ {\small Wendelfl\"{a}che}\\%
\begin{tabular}
[c]{c}%
$\cdot$\ {\small konstant gedralltes Orthoid}\\
{\small oder Edlinger-Fl\"{a}che}%
\end{tabular}
\end{tabular}
\\ \hline%
\begin{tabular}
[c]{c}%
{\small der Orthogonaltrajektorien}\\
{\small der Kurven konstanten}\\
{\small Striktionsabstandes}%
\end{tabular}
&
\begin{tabular}
[c]{c}%
${\small 0}$\\
${\small \delta}^{2}{\small k}^{-1}$%
\end{tabular}
&
\begin{tabular}
[c]{c}%
{\small -}\\
${\small -3}$%
\end{tabular}
&
\begin{tabular}
[c]{c}%
$\cdot$\ {\small Orthoid}\\
$\cdot$\ {\small Edlinger-Fl\"{a}che}%
\end{tabular}
\\ \hline%
\begin{tabular}
[c]{c}%
{\small der Orthogonaltrajektorien}\\
{\small der Erzeugenden}%
\end{tabular}
&
\begin{tabular}
[c]{c}%
${\small 0}$\\
${\small -k}$\\
${\small -\delta}^{2}{\small \lambda}$%
\end{tabular}
&
\begin{tabular}
[c]{c}%
{\small -}\\
${\small -1}$\\
${\small -3}$%
\end{tabular}
&
\begin{tabular}
[c]{c}%
$\cdot$\ {\small Wendelfl\"{a}che}\\
$\cdot$\ {\small konstant gedralltes Orthoid}\\
$\cdot$\ {\small konstant gedralltes Konoid}%
\end{tabular}
\\ \hline%
\begin{tabular}
[c]{c}%
{\small der Kurven }\\
{\small konstanter}\\
{\small Gau\ss scher Kr\"{u}mmung}%
\end{tabular}
&
\begin{tabular}
[c]{c}%
${\small 0}$\\
${\small -k}$%
\end{tabular}
&
\begin{tabular}
[c]{c}%
{\small -}\\
${\small -1}$%
\end{tabular}
&
\begin{tabular}
[c]{c}%
$\cdot$\ {\small Wendelfl\"{a}che}\\%
\begin{tabular}
[c]{c}%
$\cdot$ {\small konstant gedralltes Orthoid}\\
{\small oder Edlinger-Fl\"{a}che}%
\end{tabular}
\end{tabular}
\\ \hline
\end{tabular}
\newline \noindent

\end{document}